\newcommand{\CC}{\mathbb{C}}
\newcommand{\NN}{\mathbb{N}}
\newcommand{\ZZ}{\mathbb{Z}}
\newcommand{\RR}{\mathbb{R}}
\newcommand{\SU}{\mathrm{SU}}
\newcommand{\diag}{\mathrm{diag}}
\def\pFq#1#2#3#4#5{%
	{}_{#1}F_{#2}\biggl[\genfrac..{0pt}{}{#3}{#4};#5\biggr]%
}
\def\pfq#1#2#3#4#5#6{%
	{}_{#1}\phi_{#2}\biggl(\genfrac..{0pt}{}{#3}{#4};#5,#6\biggr)%
}
\theoremstyle{definition}
\newtheorem{theorem}{Theorem}[section]
\newtheorem{lemma}[theorem]{Lemma}
\newtheorem{example}[theorem]{Example}
\newtheorem{remark}[theorem]{Remark}
\newtheorem{corollary}[theorem]{Corollary}
\numberwithin{equation}{section} 
\begin{document}

\title{Explicit matrix inverses for lower triangular matrices with entries involving continuous $q$-ultraspherical polynomials}
\author{
  Noud Aldenhoven\footnote{Email address: \href{mailto:n.aldenhoven@math.ru.nl}{n.aldenhoven@math.ru.nl}}\\ 
  Radboud University Nijmegen, FNWI, IMAPP, \\
  Heyendaalseweg 135, 6525 AJ Nijmegen, The Netherlands
}
\date{}

\maketitle

\begin{abstract}
For a one-parameter family of lower triangular matrices with entries involving continuous \linebreak[4] $q$-ultraspherical polynomials we give an explicit lower triangular inverse matrix, with entries involving again continuous $q$-ultraspherical functions.
The matrices are $q$-analogues of results given by Cagliero and Koornwinder recently.
The proofs are not $q$-analogues of the Cagliero-Koornwinder case, but are of a different nature involving $q$-Racah polynomials.
Some applications of these new formulas are given.
Also the limit $\beta \to 0$ is studied and gives rise to continuous $q$-Hermite polynomials for $0 < q < 1$ and $q > 1$.
\end{abstract}

\section{Introduction}

In \cite{KvPR13} Koelink, van Pruijssen and Rom\'an needed to invert a lower triangular matrix with entries involving Gegenbauer (or ultraspherical) polynomials.
The solution was given by Cagliero and Koornwinder \cite{CK14} in the wider context of a two-parameter family of lower triangular matrices involving Jacobi polynomials.
The inverse of this matrix is given in terms of Jacobi polynomials as well.
Cagliero and Koornwinder \cite{CK14} solved this problem using the Rodrigues formula for the Jacobi polynomials and some variations on the product rule.
Thereafter Koelink, de los R\'ios and Rom\'an \cite{KdlRR14} used the results of Cagliero and Koornwinder \cite{CK14} with an extra free parameter.

In this paper we give a partial $q$-analogue of the result of Cagliero and Koornwinder \cite{CK14}.
In a forthcoming paper \cite{AKR14}, which is a quantum analogue of \cite{KvPR12, KvPR13}, the main Theorem \ref{thm:main} is used to obtain an inverse of a lower triangular matrix with entries involving continuous $q$-ultraspherical polynomials.
Theorem \ref{thm:main} gives the inverse of this matrix in a more general situation.
Theorem \ref{thm:main} is the main result of this paper.

\begin{theorem} \label{thm:main}
Let $\beta \in \CC \backslash \{0\}$, $\beta \neq q^{\frac{k}{2}}$ for $k \in \ZZ$. 
Define doubly infinite lower triangular matrices $L^{\beta}(x)$ and $M^{\beta}(x)$ by
\begin{align*}
L^{\beta}(x)_{m, n} &= \frac{1}{(\beta^2 q^{2n}; q)_{m - n}} C_{m - n}(x;\beta q^n|q), \qquad n \leq m, \\[0.3cm]
M^{\beta}(x)_{m, n} &= \frac{\beta^{m-n}q^{(m-1)(m-n)}}{(\beta^2 q^{m+n-1}; q)_{m - n}} C_{m-n}(x;\beta^{-1}q^{1-m}|q), \qquad n \leq m,
\end{align*}
where $m, n \in \ZZ$ and $C_m(x;\beta|q)$ are the continuous $q$-ultraspherical polynomials defined in Section \ref{sec:prelim} for all $\beta$. 
Then $M^{\beta}(x)$ and $L^{\beta}(x)$ are each other's inverse, i.e. $L^{\beta}(x)M^{\beta}(x) = I = M^{\beta}(x)L^{\beta}(x)$, where $I_{m, n} = \delta_{m, n}$ is the identity.
\end{theorem}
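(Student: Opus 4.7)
The plan is to verify the identity $L^\beta(x) M^\beta(x) = I$ directly; the other equation $M^\beta(x) L^\beta(x) = I$ then follows from the uniqueness of the inverse in the ring of doubly infinite lower triangular matrices with unit diagonal, together with the observation that $C_0(x;\gamma|q) = 1$ for every admissible $\gamma$, so both $L^\beta$ and $M^\beta$ are unit-diagonal and all the relevant matrix products are defined entrywise by finite sums. Fixing $n \le m$, setting $N = m - n$, reindexing $k = n + j$ and normalising by $\alpha = \beta q^n$ puts the $(m,n)$-entry of $L^\beta M^\beta$ in the form
\[
\sum_{j=0}^{N} \frac{\alpha^{j} \, q^{j(j-1)}}{(\alpha^2 q^{2j}; q)_{N-j} \, (\alpha^2 q^{j-1}; q)_{j}} \, C_{N-j}(x;\alpha q^{j}|q) \, C_{j}(x;\alpha^{-1} q^{1-j}|q).
\]
The case $N = 0$ collapses to $1$; the substance of the theorem is therefore the vanishing of this sum for every $N \ge 1$ and every $\alpha$ avoiding the poles of the coefficients. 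A small rewriting consolidates the two denominators into $(\alpha^2 q^{j-1}; q)_{N+1} / (1 - \alpha^2 q^{2j-1})$, exposing the characteristic balancing factor of a very-well-poised series.

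To unpack the hypergeometric content I would insert the standard trigonometric expansion
\[
C_k(\cos\theta;\gamma|q) = \sum_{\ell=0}^{k} \frac{(\gamma;q)_\ell \, (\gamma;q)_{k-\ell}}{(q;q)_\ell \, (q;q)_{k-\ell}} \, e^{\mathrm{i}(k-2\ell)\theta}
\]
into both continuous $q$-ultraspherical factors of the summand, interchange the order of summation, and extract the coefficient of each monomial $e^{\mathrm{i}(N-2s)\theta}$ for $0 \le s \le N$; the claim is then equivalent to the vanishing of every such Fourier coefficient. After collapsing the $q$-shifted factorials, each inner sum in $j$ (with $s$ fixed) acquires the shape of a balanced, terminating ${}_4\phi_3$ whose parameters are precisely those occurring in a $q$-Racah polynomial.

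The main obstacle — and, per the introduction, the reason the proof departs from the Cagliero--Koornwinder strategy — is to identify this ${}_4\phi_3$ explicitly as a $q$-Racah polynomial and then to deduce its vanishing either from a Sears or Jackson transformation for terminating very-well-poised ${}_4\phi_3$ series, or from the $q$-Racah orthogonality read as an inner product of a positive-degree $q$-Racah polynomial against the constant polynomial $R_0 \equiv 1$. Once every Fourier coefficient is shown to vanish, the whole sum does, so $L^\beta(x) M^\beta(x) = I$ for $N \ge 1$, and the uniqueness argument of the first paragraph delivers $M^\beta(x) L^\beta(x) = I$ as well.
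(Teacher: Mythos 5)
Your overall architecture is the same as the paper's (Lemma \ref{lem:technical} feeding into the key Lemma \ref{lem:Linv}): your reindexed sum with $\alpha = \beta q^{n}$, $N = m-n$ is exactly the left-hand side of (\ref{eqn:lem2.1}), your consolidated factor $(1-\alpha^{2}q^{2j-1})/(\alpha^{2}q^{j-1};q)_{N+1}$ is precisely the $q$-Racah weight with $\gamma=\delta=\alpha q^{-1}$, and your uniqueness argument for deducing $M^{\beta}L^{\beta}=I$ from $L^{\beta}M^{\beta}=I$ in the algebra of unit lower triangular doubly infinite matrices is sound (and makes explicit what the paper leaves implicit). However, there is a genuine gap at the endpoint Fourier coefficients. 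Your vanishing mechanism --- orthogonality of a positive-degree $q$-Racah polynomial against $R_{0}\equiv 1$ --- disposes of $d(s)$ only for $1\le s\le N-1$. For $s=0$ (and $s=N$, equal to it by the symmetry $d(s)=d(N-s)$) the identified polynomial is $R_{0}$ itself, so the coefficient is the total mass of the signed weight,
\begin{align*}
d(0) \;=\; \sum_{j=0}^{N} \frac{(1-\alpha^{2}q^{2j-1})}{(\alpha^{2}q^{j-1};q)_{N+1}}\,\alpha^{j}q^{j(j-1)}\,
\frac{(\alpha q^{j};q)_{N-j}}{(q;q)_{N-j}}\,\frac{(\alpha^{-1}q^{1-j};q)_{j}}{(q;q)_{j}},
\end{align*}
and orthogonality says nothing about it. What makes the proof close is the degenerate feature of this particular $q$-Racah family ($\beta_{\mathrm{Racah}}=1$, $\alpha_{\mathrm{Racah}}q=q^{-N}$, cf.\ Remark \ref{rmk:referee}): the constant $h_{0}(\gamma,\delta;N)$ in (\ref{eqn:qracah_ortho}) equals $\delta_{N,0}$, i.e.\ the total mass itself vanishes for $N\ge 1$. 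This is an extra summation input (a Chu--Vandermonde-type evaluation, provable via \cite[(II.21)]{GR04}), not a consequence of orthogonality, and it is exactly what the last line of the paper's proof of Lemma \ref{lem:Linv} invokes. Without it you get $(L^{\beta}M^{\beta})_{m,n}=0$ for $m>n$ only up to the two extreme Chebyshev coefficients.

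There is also a structural misassignment in your outline. After extracting the coefficient of $e^{i(N-2s)\theta}$ you are left with a \emph{double} sum, over the matrix index $j$ and a Fourier index $t$; it is the inner $t$-sum at fixed $j$ that is a balanced terminating ${}_4\phi_3$ (in two regimes, (\ref{eqn:lem1.4}) and (\ref{eqn:lem1.5}), according to whether $j\le N-s$ or not), and it becomes the $q$-Racah polynomial $R_{s}(\mu(j);q^{-N-1},1,\alpha q^{-1},\alpha q^{-1};q)$ only after applying the two forms of Sears' transformation (\ref{eqn:sears1}) and (\ref{eqn:sears2}), one per regime, using the balancing $\alpha_{j}\beta_{j}=q$ built into your specialization. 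The outer $j$-sum is then the orthogonality sum (\ref{eqn:qracah_ortho}); summing over $j$ first, as your phrase ``each inner sum in $j$'' suggests, does not produce a terminating balanced ${}_4\phi_3$ in any evident way. (Minor point: these series are balanced, handled by Sears; ``very-well-poised ${}_4\phi_3$'' is not the relevant class, and a transformation alone cannot establish vanishing.) Since you defer the Sears identification as ``the main obstacle'' and omit the $h_{0}=\delta_{N,0}$ endpoint evaluation, your proposal reproduces the frame of the paper's proof but not its two essential steps.
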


The proof of Theorem \ref{thm:main} is given in Section \ref{sec:mainproof}.

Theorem \ref{thm:main} has a finite dimensional analogue, because the entries of $L^{\beta} M^{\beta}$ only involve finite sums of continuous $q$-ultraspherical polynomials.
From Theorem \ref{thm:main} we have to following corollary.

\begin{corollary} \label{cor:main}
For a non-negative integer $N$ and $\beta \in \CC \backslash \{0\}$ such that $\beta \neq q^{-\frac{k}{2}}$ for $k=0,1,\ldots,2N-2$.
Define lower triangular matrices $L^{\beta}(x)$ and $M^{\beta}(x)$
\begin{align*}
L^{\beta}(x)_{m, n} &= \frac{1}{(\beta^2 q^{2n}; q)_{m - n}} C_{m - n}(x;\beta q^n|q), \qquad 0 \leq n \leq m \leq N \\[0.3cm]
M^{\beta}(x)_{m, n} &= \frac{\beta^{m-n}q^{(m-1)(m-n)}}{(\beta^2 q^{m+n-1}; q)_{m - n}} C_{m-n}(x;\beta^{-1}q^{1-m}|q), \qquad 0 \leq n \leq m \leq N.
\end{align*}
Then $M^{\beta}(x)$ and $L^{\beta}(x)$ are each others inverse, i.e. $L^{\beta}(x)M^{\beta}(x) = I = M^{\beta}(x)L^{\beta}(x)$, where $I$ is the identity matrix.
\end{corollary}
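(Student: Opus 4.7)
The plan is to deduce the corollary directly from Theorem \ref{thm:main} by exploiting the lower triangular structure of both matrices. First I would observe that for any indices with $0 \le n \le m \le N$ the $(m,n)$-entry of the product $L^\beta(x)M^\beta(x)$ in the truncated $(N+1)\times(N+1)$ setting is the finite sum
\[
(L^\beta(x)M^\beta(x))_{m,n} = \sum_{k=n}^{m} L^\beta(x)_{m,k}\, M^\beta(x)_{k,n},
\]
which is \emph{identical} to the corresponding entry of the doubly infinite product of Theorem \ref{thm:main}: both matrices are lower triangular, so the sum over $k$ is automatically restricted to $n\le k\le m$, and the individual entries $L^\beta(x)_{m,k}$ and $M^\beta(x)_{k,n}$ are given by the same formulas in both settings. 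Proving the identity in the corollary thus reduces to invoking Theorem \ref{thm:main}.

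The one subtlety is that Theorem \ref{thm:main} requires $\beta \neq q^{k/2}$ for \emph{every} $k\in\ZZ$, whereas the corollary only excludes the finitely many values $\beta=q^{-k/2}$ for $k=0,1,\ldots,2N-2$. To bridge this gap I would appeal to a rational-function argument: every entry of $L^\beta(x)$ and of $M^\beta(x)$ in the truncation is a rational function of $\beta$ whose possible poles come only from the $q$-Pochhammer factors $(\beta^2 q^{2n};q)_{m-n}$ and $(\beta^2 q^{m+n-1};q)_{m-n}$ for $0\le n\le m\le N$. A direct inspection of the exponents appearing in these factors for the restricted index range shows that the corollary's hypothesis on $\beta$ is exactly what is needed to ensure that these entries, and hence the products $L^\beta(x)M^\beta(x)$ and $M^\beta(x)L^\beta(x)$, are well-defined.

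By Theorem \ref{thm:main}, each entry of $L^\beta(x)M^\beta(x)-I$ vanishes on $\{\beta\in\CC\backslash\{0\}:\beta\neq q^{k/2}\text{ for all }k\in\ZZ\}$, which is dense in the domain allowed by the corollary. Since each such entry is rational in $\beta$ and regular on that domain, it vanishes identically there, yielding $L^\beta(x)M^\beta(x)=I$. The reverse identity $M^\beta(x)L^\beta(x)=I$ is then automatic for square matrices over a commutative ring. The main (modest) obstacle is the verification alluded to in the middle paragraph: namely, that the exponents appearing in the truncated Pochhammer factors cover exactly the set of exponents corresponding to the excluded values $\{q^{-k/2}:k=0,\ldots,2N-2\}$ stated in the corollary.
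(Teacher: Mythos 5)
Your proposal is correct and is in substance the route the paper takes: the paper derives Corollary \ref{cor:main} from Theorem \ref{thm:main} in a single line, observing that by lower triangularity the entries of $L^{\beta}(x)M^{\beta}(x)$ are the same finite sums in the truncated and in the doubly infinite setting. What you add---and what the paper silently glosses over---is the hypothesis mismatch: Theorem \ref{thm:main} excludes the infinite set $\{q^{k/2} : k \in \ZZ\}$, so e.g.\ $\beta = q$ is allowed by the corollary but not covered by the theorem, and some continuation argument really is needed. Your rational-function argument supplies it correctly: each entry of $L^{\beta}(x)M^{\beta}(x) - I$ is rational in $\beta$, vanishes on the theorem's (infinite) parameter set, hence vanishes identically, and $M^{\beta}L^{\beta} = I$ then follows from $L^{\beta}M^{\beta} = I$ for square matrices over a commutative ring. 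Two remarks. First, the paper's own machinery offers a more direct bridge than continuation from the theorem: Lemma \ref{lem:Linv}, which evaluates exactly the entries $(L^{\beta}M^{\beta})_{m,n}$, is already stated under the finite hypothesis $\beta^2 \neq q^{-2m+1}, \ldots, q^{-2n}$, so one can cite it entrywise; even then a small residual continuation remains, since e.g.\ $\beta^2 = q^{-(2N-1)}$ is permitted by the corollary but excluded by the lemma at $(m,n) = (N,0)$, so your density argument is not superfluous. Second, a caveat on your claim that the corollary's hypothesis is ``exactly what is needed'': the poles of the entries occur where $\beta^2 = q^{-k}$, i.e.\ at $\beta = \pm q^{-k/2}$ for $k = 0, \ldots, 2N-2$, whereas the corollary (and your verification) excludes only the values $q^{-k/2}$ themselves---for instance $\beta = -1$ with $N \geq 1$ makes $(\beta^2 q^{0}; q)_1 = 0$ in the denominator of $L^{\beta}(x)_{1,0}$. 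This imprecision is inherited from the paper's statement (the intended reading is $\beta^2 \neq q^{-k}$), so it does not count against your argument, but the ``direct inspection'' you defer to the end would surface it rather than confirm the hypothesis as literally stated.
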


The proof of Theorem \ref{thm:main} is not a straightforward $q$-analogue of the proof given by Cagliero and Koornwinder \cite{CK14}.
The proof uses $q$-Racah polynomials and does not use Rodrigues formulas or product rules of differentials which are the essential ingredients for the proof in \cite{CK14}.
In particular, the $q \to 1$ limit of the proof presented here gives an alternative proof of the special case $\alpha = \beta$ of Cagliero and Koornwinder \cite{CK14}.

We compute the coefficients of $e^{ik\theta}$ of products of two continuous $q$-ultraspherical polynomials and express the coefficients in terms of terminating balanced basic hypergeometric series ${}_4 \phi_3$.
For certain parameters this series transforms to a $q$-Racah polynomial.
The orthogonality relations of the $q$-Racah polynomials then lead to Theorem \ref{thm:main}.
The proof of Theorem \ref{thm:main}, for $q \to 1$, gives an interesting new proof of \cite[Theorem 4.1]{CK14} in the special case $\alpha = \beta$, showing that the coefficients of $e^{ik\theta}$ of products of certain Gegenbauer polynomials are actually Racah polynomials.
The entries of the matrix identity $L(x)M(x) = I$ in \cite[Theorem 4.1]{CK14} correspond to orthogonality relations of Racah polynomials, see Example \ref{exp:qto1}.

In Section \ref{sec:bto0} we study matrices $L^{\beta}$ and $M^{\beta}$ for Theorem \ref{thm:main} for a suitable limit $\beta \to 0$.
The entries of $L^{\beta}$ become continuous $q$-Hermite polynomials and the entries of $M^{\beta}$ converge to continuous $q^{-1}$-Hermite polynomials as $\beta \to 0$.

We emphasise again that our proof is different than the proof of Calgiero and Koornwinder \cite{CK14}.
It is possible to extend the proof of Calgiero and Koornwinder \cite{CK14} to a $q$-analogue for certain polynomials in the $q$-Askey scheme \cite{KLS10}.
For example \cite[Lemma 5.1]{CK14} has a $q$-analogue for the $q$-derivative operator \cite[Exercise 1.12]{GR04}.
Then with the use of Rodrigues' formula and suitable parameters for the orthogonal polynomials it is possible to find $q$-analogues for \cite[(4.1),(4.2)]{CK14}.
The author was able to extend \cite[(4.1), (4.2)]{CK14} to the little $q$-Jacobi polynomials.
However these results involve different $q$-shifts in the $x$ of the polynomials and don't seem to lead to a result similar to Theorem \ref{thm:main} or \cite[Theorem 4.1]{CK14}.
Also Calgiero and Koornwinder \cite{CK14} were motivated by \cite{BG09, KvPR13} to extend their formulas to a two parameter family of Jacobi polynomials.
We lack this motivation and therefore decided not to include these results for the little $q$-Jacobi polynomials in this paper. 
We didn't extend the results to other families of polynomials.

\section{Preliminaries} \label{sec:prelim}
We recall some facts on basic hypergeometric series and related polynomials, see Gasper and Rahman \cite{GR04} and Koekoek, Lesky and Swarttouw \cite{KLS10}.
We fix $0 < q < 1$ and we follow notation of \cite{GR04}.

For $\beta \in \CC$, the continuous $q$-ultraspherical polynomials are given by
\begin{align}
C_n(x; \beta | q) &= \sum_{k = 0}^{n}
  \frac{
    (\beta; q)_{k} (\beta; q)_{n - k}
  }{
    (q; q)_{k} (q; q)_{n - k}
  } e^{i (n - 2k) \theta}, \qquad x = \cos(\theta),
  \label{eqn:qultraexp}
\end{align}
see \cite[Exercise 1.28]{GR04} and \cite[\S 14.10]{KLS10}.
Notice that the continuous $q$-ultraspherical polynomials are defined for all $\beta$.
A generating function for the continuous $q$-ultraspherical polynomials is
\begin{align}
\sum_{n = 0}^{\infty} C_n(x; \beta | q) t^n
  &= \frac{
    (\beta t e^{i\theta}, \beta t e^{-i\theta}; q)_{\infty}
  }{
    (te^{i\theta}, te^{-i\theta}; q)_{\infty}
  } \qquad |t| < 1, \qquad x = \cos(\theta) \in [-1, 1], \label{eqn:q-ultra-gen}
\end{align}
see \cite[Exercise 1.29]{GR04} and \cite[(14.10.27)]{KLS10}.

For $\alpha, \beta, \gamma, \delta \in \RR$ such that $q \alpha = q^{-N}$, $\beta\delta q = q^{-N}$ or $\gamma q = q^{-N}$, for $N \in \NN$, define the $q$-Racah polynomials 
\begin{align} \label{eqn:qracah}
R_n(\mu(x);\alpha,\beta,\gamma,\delta;q) = \pfq{4}{3}{
  q^{-n}, \alpha \beta q^{n+1}, q^{-x}, \gamma \delta q^{x+1}
}{
  \alpha q, \beta \delta q, \gamma q
}{q}{q}, \qquad \mu(x) = q^{-x} + \gamma \delta q^{x+1},
\end{align}
where $n = 0, 1, \ldots, N$.
If $q \alpha = q^{-N}$ and $\beta = 1$ the $q$-Racah polynomials are not orthogonal with respect to a positive measure.
Still the $q$-Racah polynomials are orthogonal 
\begin{align*} 
\sum_{x=0}^N \frac{
  (q^{-N}, \gamma \delta q; q)_x
}{
  (q, \gamma \delta q^{N+2}; q)_x
}
\frac{
  (1 - \gamma \delta q^{2x + 1})
}{
  (1 - \gamma \delta q)
} q^{Nx} R_m(\mu(x);q^{-N-1},1,\gamma,\delta;q) &R_n(\mu(x);q^{-N-1},1,\gamma,\delta;q) \nonumber \\
&= \delta_{m,n} h_m(\gamma, \delta; N),
\end{align*}
where $h_m(\gamma, \delta; N)$ is given in \cite[\S 7.2]{GR04} and \cite[\S 14.2]{KLS10}.
It follows that if $n = 0$ we have
\begin{align} 
\sum_{x=0}^N \frac{
  (q^{-N}, \gamma \delta q; q)_x
}{
  (q, \gamma \delta q^{N+2}; q)_x
}
\frac{
  (1 - \gamma \delta q^{2x + 1})
}{
  (1 - \gamma \delta q)
} q^{Nx} R_m(\mu(x);q^{-N-1},1,\gamma,\delta;q) = \delta_{m,0} h_0(\gamma, \delta; N), \label{eqn:qracah_ortho}
\end{align}
where $h_0(\gamma, \delta; N) = \delta_{N, 0}$ if $\gamma \neq q^{-\ell}$ and $\delta \neq q^{-m}$ with $\ell, m = 1, 2, \ldots, N$.

Note that (\ref{eqn:qracah_ortho}) can also be proved directly, also see \cite{AW79}.
To show this substitute (\ref{eqn:qracah}) in (\ref{eqn:qracah_ortho}) so that
\begin{align*}
\sum_{x=0}^N \frac{
  (q^{-N}, \gamma \delta q; q)_x
}{
  (q, \gamma \delta q^{N+2}; q)_x
}
\frac{
  (1 - \gamma \delta q^{2x + 1})
}{
  (1 - \gamma \delta q)
} q^{Nx}
\sum_{k=0}^{N}
\frac{
  (q^{-m}, q^{m-N}, q^{-x}, \gamma\delta q^{x+1}; q)_{k}
}{
  (q, q^{-N}, \delta q, \gamma q; q)_{k}
} q^k.
\end{align*}
Then expand the left hand side of (\ref{eqn:qracah_ortho}) in $q^x$ observing that it is a polynomial in $q^{x}$ of degree $N-k$.
Finally applying the summation formula \cite[(II.21)]{GR04} on the $x$-sum gives the right hand side of (\ref{eqn:qracah_ortho}). 

\begin{remark} \label{rmk:referee}
One of the referees pointed out that if $q \alpha = q^{-N}$ and $\beta = 1$ then from (\ref{eqn:qracah}) it follows that $R_n = R_{N-n}$.
Therefore for $n > \frac{1}{2}N$ the polynomial $R_n$ will have degree $N - n < n$.
So there can be no non-degenerate orthogonality.
However, the system of polynomials $R_n$ for $n \leq \frac{1}{2}N$ can still be orthogonal with respect to positive weights.
\end{remark}

Sears' transformation formula, \cite[(III.15) \& (III.16)]{GR04}, for terminating balanced ${}_4 \phi_3$ series is
\begin{align} 
\pfq{4}{3}{
  q^{-n}, a, b, c
}{
  d, e, f
}{q}{q}
&= a^n \frac{
  (ea^{-1},fa^{-1};q)_n
}{
  (e,f;q)_n
} \pfq{4}{3}{
  q^{-n}, a, db^{-1}, dc^{-1}
}{
  d, aq^{1-n}e^{-1}, aq^{1-n}f^{-1}
}{q}{q} \label{eqn:sears1} \\
&= \frac{
  (a, ef(ab)^{-1}, ef(ac)^{-1}; q)_{n}
}{
  (e, f, ef(abc)^{-1}; q)_{n}
} \pfq{4}{3}{
  q^{-n}, ea^{-1}, fa^{-1}, ef(abc)^{-1}
}{
  ef(ab)^{-1}, ef(ac)^{-1}, q^{1-n}a^{-1}
}{q}{q}, \label{eqn:sears2}
\end{align}
where $abc = defq^{n-1}$.

\section{Proof of Theorem \texorpdfstring{\ref{thm:main}}{1.1}} \label{sec:mainproof}

The idea of the proof of Theorem \ref{thm:main} is to first expand a sum of products of continuous $q$-ultraspherical polynomials in terms of $e^{ik \theta}$, where $x = \cos(\theta)$. 
We show that the coefficients of $e^{ik\theta}$ are balanced basic hypergeometric series ${}_4\phi_3$.
For the continuous $q$-ultraspherical polynomials with parameters as in Theorem \ref{thm:main} we show that the coefficients of $e^{ik\theta}$ correspond to the orthogonality relations for $q$-Racah polynomials.
This proves the key Lemma \ref{lem:Linv} from which Theorem \ref{thm:main} follows.

\begin{lemma} \label{lem:technical}
Take $n \in \NN$.
Let $\alpha_k, \beta_k$ and $c(k)$ be constants for $k=0,1,\ldots,n$.
Then
\begin{align} \label{eqn:lem1.1}
\sum_{k=0}^{n} c(k)\, C_{n-k}(x;\alpha_k|q) C_k(x;\beta_k|q) 
  = \sum_{p=0}^{n} d(p)\, e^{i(n-2p)\theta}, \qquad x = \cos(\theta),
\end{align}
where $d(p)$ is given by
\begin{align} 
d(p) &= \sum_{k=0}^{n-p} c(k)\, 
  \frac{(\alpha_k;q)_p}{(q;q)_{p}}
  \frac{(\alpha_k;q)_{n-p-k}}{(q;q)_{n-p-k}}
  \frac{(\beta_k;q)_{k}}{(q;q)_{k}}
  \pfq{4}{3}{
    q^{-p}, \alpha_k q^{n-p-k}, q^{-k}, \beta_k
  }{
    q^{1-p}\alpha_k^{-1}, q^{1-k}\beta_k^{-1}, q^{n-p-k+1}
  }{q}{q^2 \alpha_k^{-1}\beta_k^{-1}} \nonumber \\
&+ \sum_{k=n-p+1}^{n} c(k)\,
  \frac{(\alpha_k;q)_{n-k}}{(q;q)_{n-k}}
  \frac{(\beta_k;q)_{p-n+k}}{(q;q)_{p-n+k}}
  \frac{(\beta_k;q)_{n-p}}{(q;q)_{n-p}}
  \pfq{4}{3}{
    q^{k-n}, q^{p-n}, \alpha_k, \beta_k q^{k+p-n}
  }{
    q^{1-n+k}\alpha_k^{-1}, q^{k+p-n+1}, q^{1-n+p}\beta_k^{-1}
  }{q}{q^2 \alpha_k^{-1} \beta_k^{-1}}. \label{eqn:d(p)}
\end{align}
\end{lemma}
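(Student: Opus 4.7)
The plan is to expand each continuous $q$-ultraspherical polynomial on the left-hand side of \eqref{eqn:lem1.1} by its explicit formula \eqref{eqn:qultraexp}, multiply the two finite sums, and then collect by powers of $e^{i\theta}$. Substituting \eqref{eqn:qultraexp} and grouping, a single summand produces a double sum whose generic monomial carries the exponential factor $e^{i(n-2j-2l)\theta}$, with $0\le j\le n-k$ and $0\le l\le k$. Isolating $j+l=p$ and summing over $k$ one obtains
\begin{align*}
d(p) = \sum_{k=0}^{n} c(k) \sum_{j=\max(0,p-k)}^{\min(p,n-k)}
  \frac{(\alpha_k;q)_j(\alpha_k;q)_{n-k-j}(\beta_k;q)_{p-j}(\beta_k;q)_{k-p+j}}{(q;q)_j(q;q)_{n-k-j}(q;q)_{p-j}(q;q)_{k-p+j}}.
\end{align*}

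To match the two-piece form in \eqref{eqn:d(p)} the outer $k$-sum is split at $k=n-p$. In the range $0\le k\le n-p$ the upper bound equals $p$; setting $i=p-j$ lets $i$ run over $[0,\min(p,k)]$, and the $i=0$ term factors out as the prefactor
$\frac{(\alpha_k;q)_p(\alpha_k;q)_{n-p-k}(\beta_k;q)_k}{(q;q)_p(q;q)_{n-p-k}(q;q)_k}$ of the first summand in \eqref{eqn:d(p)}. In the range $n-p+1\le k\le n$ the upper bound is $n-k$; setting $\ell=n-k-j$ lets $\ell$ run over $[0,\min(n-k,n-p)]$, and the $\ell=0$ term factors out as the prefactor $\frac{(\alpha_k;q)_{n-k}(\beta_k;q)_{p-n+k}(\beta_k;q)_{n-p}}{(q;q)_{n-k}(q;q)_{p-n+k}(q;q)_{n-p}}$ of the second summand.

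The heart of the argument, and the main bookkeeping obstacle, is to show that in each case the ratio of the general term to the boundary term coincides with the corresponding $_4\phi_3$ summand. The only tools required are the standard shift identities
\begin{align*}
(a;q)_{m+i} = (a;q)_m(aq^m;q)_i, \qquad
(a;q)_{m-i} = \frac{(a;q)_m}{(q^{1-m}/a;q)_i}\left(-\frac{q}{a}\right)^i q^{\binom{i}{2}-mi},
\end{align*}
applied to each of the four $q$-Pochhammer factors in the general summand. A careful accounting shows that the $q^{\binom{i}{2}}$ contributions cancel in pairs, the four sign factors multiply to $+1$, the $\alpha_k$- and $\beta_k$-powers combine into $(\alpha_k\beta_k)^{-i}$, the remaining explicit $q$-powers collapse to $q^{2i}$, and the surviving $q$-Pochhammer factors in numerator and denominator reorganise into exactly the parameters displayed in \eqref{eqn:d(p)}. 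Recognising the resulting expressions as terminating balanced $_4\phi_3$ series then yields the lemma.
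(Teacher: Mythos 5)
Your proposal is correct and takes essentially the same route as the paper's own proof: expanding both factors via (\ref{eqn:qultraexp}), extracting the coefficient of $e^{i(n-2p)\theta}$, splitting the $k$-sum at $k=n-p$, and reindexing each inner sum so that the boundary term factors out as the prefactor of a terminating balanced ${}_4\phi_3$ (your substitutions $i=p-j$ and $\ell=n-k-j$ coincide with the paper's summation variable $t$ and its shift $t \mapsto t+k+p-n$). Your accounting of the $q$-Pochhammer shifts---the cancellation of the $q^{\binom{i}{2}}$ factors, the signs multiplying to $+1$, and the residual powers collapsing to $(q^2\alpha_k^{-1}\beta_k^{-1})^i$---is exactly the ``simplifying the $q$-Pochhammer symbols'' step the paper leaves implicit, and it checks out.
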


\begin{proof}
First expand the left hand side of (\ref{eqn:lem1.1}) using (\ref{eqn:qultraexp}), so that the left hand side of (\ref{eqn:lem1.1}) equals
\begin{align} \label{eqn:lem1.2}
\sum_{k = 0}^n c(k)\,
  \sum_{s = 0}^{n-k} \frac{(\alpha_k;q)_s}{(q;q)_s} \frac{(\alpha_k;q)_{n-k-s}}{(q;q)_{n-k-s}}
  \sum_{t = 0}^{k} \frac{(\beta_k;q)_{t}}{(q;q)_t} \frac{(\beta_k;q)_{k-t}}{(q;q)_{k-t}}
  e^{i(n-2(s+t))\theta}.
\end{align}
Now fix $p = s+t$ and substitute $s = p-t$ in (\ref{eqn:lem1.2}) so that the coefficient of $e^{i(n-2p)\theta}$ becomes
\begin{align} \label{eqn:lem1.3}
\sum_{k=0}^{n} c(k)\, \sum_{t=0 \vee (k+p-n)}^{k \wedge p}
  \frac{(\alpha_k;q)_{p-t}}{(q;q)_{p-t}}
  \frac{(\alpha_k;q)_{n-k-p+t}}{(q;q)_{n-k-p+t}}
  \frac{(\beta_k;q)_{t}}{(q;q)_{t}}
  \frac{(\beta_k;q)_{k-t}}{(q;q)_{k-t}}.
\end{align}
For $0 \leq k \leq n-p$ so that $k+p-n \leq 0$, the $t$-sum of (\ref{eqn:lem1.3}) is, after simplifying the $q$-Pochhammer symbols, the balanced terminating ${}_4 \phi_3$
\begin{align} \label{eqn:lem1.4}
\frac{(\alpha_k;q)_p}{(q;q)_{p}} 
\frac{(\alpha_k;q)_{n-p-k}}{(q;q)_{n-p-k}}
\frac{(\beta_k;q)_{k}}{(q;q)_{k}}
\pfq{4}{3}{
  q^{-p}, \alpha_k q^{n-p-k}, q^{-k}, \beta_k
}{
  q^{1-p}\alpha_k^{-1}, q^{1-k}\beta_k^{-1}, q^{n-p-k+1}
}{q}{q^2 \alpha_k^{-1}\beta_k^{-1}}.
\end{align}
For $n-p \leq k \leq n$ so that $k+p-n \geq 0$ substitute $t \mapsto t + k + p - n$ so that the $t$-sum of (\ref{eqn:lem1.3}) is, after simplifying the $q$-Pochhammer symbols, the balanced terminating ${}_4 \phi_3$
\begin{align} \label{eqn:lem1.5}
\frac{(\alpha_k;q)_{n-k}}{(q;q)_{n-k}}
\frac{(\beta_k;q)_{p-n+k}}{(q;q)_{p-n+k}}
\frac{(\beta_k;q)_{n-p}}{(q;q)_{n-p}}
\pfq{4}{3}{
  q^{k-n}, q^{p-n}, \alpha_k, \beta_k q^{k+p-n}
}{
  q^{1-n+k}\alpha_k^{-1}, q^{k+p-n+1}, q^{1-n+p}\beta_k^{-1}
}{q}{q^2 \alpha_k^{-1} \beta_k^{-1}}.
\end{align}
Combining (\ref{eqn:lem1.4}) and (\ref{eqn:lem1.5}) gives (\ref{eqn:d(p)}).
\end{proof}

\begin{remark}
Since the continuous $q$-ultraspherical polynomials are polynomials in $x$ the coefficients of $e^{i(n-p)\theta}$ and $e^{ip\theta}$ of the left hand side of (\ref{eqn:lem1.1}) must be equal.
Therefore $d(p) = d(n-p)$ and (\ref{eqn:lem1.1}) can be rewritten in terms of Chebychev polynomials $T_p$ of the first kind, see \cite[\S 9.8.2]{KLS10}, as follows
\begin{align*}
\sum_{k=0}^{n} c(k)\, C_{n-k}(x;\alpha_k|q) C_k(x;\beta_k|q) 
  = \sum_{p=0}^{[\frac{n}{2}]} (2 - \delta_{n, 2p}) d(p) T_{n-2p}(x).
\end{align*}
\end{remark}

\begin{remark}
It is possible to write (\ref{eqn:d(p)}) uniformly
\begin{align*}
d(p) &= \sum_{k=0}^{n} c(k)\, 
  \frac{(\alpha_k;q)_p}{(q;q)_{p}}
  \frac{(\beta_k;q)_{k}}{(q;q)_{k}}
  \frac{(\alpha_k;q)_{\infty}}{(\alpha_k q^{n-k-p};q)_{\infty}}
  \frac{(q^{n-k-p+1};q)_{\infty}}{(q;q)_{\infty}} \\
  &\qquad \times \pfq{4}{3}{
    q^{-p}, \alpha_k q^{n-p-k}, q^{-k}, \beta_k
  }{
    q^{1-p}\alpha_k^{-1}, q^{1-k}\beta_k^{-1}, q^{n-p-k+1}
  }{q}{q^2 \alpha_k^{-1}\beta_k^{-1}}.
\end{align*}
If $0 \leq k \leq n - p$ we have that (\ref{eqn:lem1.4}) is equal to
\begin{align} \label{eqn:formald(p)}
\frac{(\alpha_k;q)_p}{(q;q)_{p}}
\frac{(\beta_k;q)_{k}}{(q;q)_{k}}
\frac{(\alpha_k;q)_{\infty}}{(\alpha_k q^{n-k-p};q)_{\infty}}
\frac{(q^{n-k-p+1};q)_{\infty}}{(q;q)_{\infty}}
\pfq{4}{3}{
  q^{-p}, \alpha_k q^{n-p-k}, q^{-k}, \beta_k
}{
  q^{1-p}\alpha_k^{-1}, q^{1-k}\beta_k^{-1}, q^{n-p-k+1}
}{q}{q^2 \alpha_k^{-1}\beta_k^{-1}}.
\end{align}
Use the convention
\begin{align*}
\frac{(q^{1-N}; q)_{\infty}}{(q^{1-N}; q)_{t}} = (q^{1-N+t};q)_{\infty},
\end{align*}
so that for $n-p < k \leq n$ 
\begin{align*}
\frac{
  (q^{1-N}; q)_{\infty}
}{
  (q; q)_{\infty}
}
\sum_{t=0}^{\infty}
\frac{
  C_t
}{
  (q, q^{1-N}; q)_{t}
}
=
\frac{
  (q^{N+1}; q)_{\infty}
}{
  (q; q)_{\infty}
}
\sum_{t=0}^{\infty}
\frac{
  C_{N+t}
}{
  (q, q^{N+1}; q)_{t}
},
\end{align*}
where $N \in \NN$ and $C_t$ are arbitrary constants.
Then for $N = k + p - n$ we have that (\ref{eqn:formald(p)}) becomes
\begin{align}
&\frac{(\alpha_k;q)_{p}}{(q;q)_p}
\frac{(\beta_k;q)_{k}}{(q;q)_k}
\frac{(\alpha_k;q)_{\infty}}{(\alpha_k q^{n-k-p};q)_{\infty}}
\frac{(q^{1+k+p-n};q)_{\infty}}{(q;q)_{\infty}} \nonumber \\
&\qquad \times \sum_{t=0}^{\infty}
\frac{
  (q^{-p}, \alpha_k q^{n-k-p}, \beta_k, q^{-k}; q)_{t+k+p-n}
}{
  (q, q^{1+k+p-n};q)_{t} (q^{1-p}\alpha_k^{-1}, q^{1-k}\beta_k^{-1}; q)_{t+k+p-n}
} \left(q^2 \alpha_k^{-1} \beta_k^{-1}\right)^{t+k+p-n} \nonumber \\
&=
\frac{(\alpha_k;q)_{p}}{(q;q)_p}
\frac{(\beta_k;q)_{k}}{(q;q)_k}
\frac{(\alpha_k;q)_{\infty}}{(\alpha_kq^{n-k-p};q)_{\infty}}
\frac{(q^{1+k+p-n};q)_{\infty}}{(q;q)_{\infty}}
\frac{
  (q^{-p}, \alpha_k q^{n-k-p}, \beta_k, q^{-k}; q)_{k+p-n}
}{
  (q^{1-p}\alpha_k^{-1}, q^{1-k}\beta_k^{-1}; q)_{k+p-n}
}
\left(
q^2 \alpha_k^{-1} \beta_k^{-1}
\right)^{k+p-n} \nonumber \\
&\qquad \times \pfq{4}{3}{
  q^{k-n}, \alpha_k, \beta_k q^{k+p-n}, q^{p-n}
}{
  q^{k+p-n+1}, q^{k-n+1} \alpha_k^{-1}, q^{1+p-n} \beta_k^{-1}
}{q}{q^2\alpha_k^{-1}\beta_k^{-1}}. \label{eqn:formald(p).2}
\end{align}
Simplifying the $q$-Pochhammer symbols of (\ref{eqn:formald(p).2}) shows that (\ref{eqn:formald(p).2}) is equal to (\ref{eqn:lem1.5}).
\end{remark}

\begin{lemma} \label{lem:Linv}
For $m, n \in \ZZ$ such that $n \leq m$.
Let $\beta \in \CC$ such that $\beta^2 \neq q^{-2m+1}, q^{-2m+2}, \ldots, q^{-2n}$. 
Then
\begin{align} \label{eqn:lem2.1}
\sum_{k = 0}^{m - n}
  \frac{
    (1 - \beta^2 q^{2n + 2k - 1} )
  }{
    (\beta^2 q^{2n + k - 1}; q)_{m - n + 1}
  } \beta^k q^{k(k + n - 1)}
  C_{m - n - k}(x;\beta q^{k}| q)
  C_{k}(x; \beta^{-1} q^{1 - k - n}| q) = \delta_{m, n}.
\end{align}
\end{lemma}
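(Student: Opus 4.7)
The plan is to apply Lemma \ref{lem:technical} with
\[
  \alpha_k = \beta q^k, \qquad \beta_k = \beta^{-1} q^{1-k-n}, \qquad
  c(k) = \frac{(1 - \beta^2 q^{2n+2k-1})}{(\beta^2 q^{2n+k-1}; q)_{m-n+1}}\, \beta^k q^{k(k+n-1)},
\]
and to set $N = m-n$ for brevity. This writes the left-hand side of (\ref{eqn:lem2.1}) as $\sum_{p=0}^{N} d(p)\, e^{i(N-2p)\theta}$ with $d(p)$ given by (\ref{eqn:d(p)}). The case $N=0$ is immediate: only the $k=0$ term survives, $c(0)=1$, and $C_0(x;\beta|q)C_0(x;\beta^{-1}q^{1-n}|q)=1=\delta_{n,n}$. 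For $N\geq 1$, linear independence of the exponentials $e^{i(N-2p)\theta}$ reduces the claim to proving $d(p)=0$ for every $p\in\{0,\ldots,N\}$.

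The decisive feature of this parameter choice is that
\[
  q^2\alpha_k^{-1}\beta_k^{-1} = q^{n+1}, \qquad \alpha_k q^{N-p-k} = \beta q^{N-p}, \qquad q^{1-k}\beta_k^{-1} = \beta q^n
\]
are all independent of $k$. Consequently the uniform expression (\ref{eqn:formald(p)}) presents $d(p)$ as a single sum over $k\in\{0,\ldots,N\}$ of a $k$-independent factor times a ${}_4\phi_3$ whose argument is fixed and whose $k$-dependence is confined to four Pochhammer symbols, all built from the single variable $q^{-k}$. The main computation is then to expand this inner ${}_4\phi_3$ as a series in an auxiliary index $t$, interchange the $k$- and $t$-summations, and simplify the resulting $q$-Pochhammer ratios.

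After the interchange, the inner $k$-sum should match the $q$-Racah orthogonality sum (\ref{eqn:qracah_ortho}) with $N_{\mathrm{rac}} = m-n$ and $q$-Racah parameters satisfying $\gamma\delta\, q = \beta^2 q^{2n-1}$; this identification is forced by matching the factor $(1 - \beta^2 q^{2n+2k-1})$ in $c(k)$ against the characteristic weight factor $(1 - \gamma\delta\, q^{2k+1})$, and it is precisely this matching that dictates the form of $c(k)$ in (\ref{eqn:lem2.1}). By (\ref{eqn:qracah_ortho}) the inner sum equals $\delta_{m_t,\,0}\, h_0(\gamma, \delta; N)$ for some degree $m_t$ depending on $t$ and $p$; the hypothesis $\beta^2\neq q^{-2m+1},\ldots,q^{-2n}$ ensures that $\gamma$ and $\delta$ avoid the exceptional values $q^{-1},\ldots,q^{-N}$, so $h_0(\gamma,\delta;N)=\delta_{N,0}=0$ for every $t$, yielding $d(p)=0$.

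The main technical obstacle is the $q$-Pochhammer bookkeeping required to match the inner $k$-sum exactly with (\ref{eqn:qracah_ortho}) under the specialisation $\beta_{\mathrm{rac}}=1$, together with correct identification of the degree $m_t$. It is worth emphasising that Sears' transformation (\ref{eqn:sears1})--(\ref{eqn:sears2}) cannot be applied directly to the ${}_4\phi_3$ in (\ref{eqn:d(p)}), because its argument $q^{n+1}$ is not equal to $q$; the $q$-Racah structure has to be exposed through the summation-interchange route described above rather than by a direct Sears move. The $q\to 1$ specialisation of this argument recovers the $\alpha = \beta$ case of \cite[Theorem 4.1]{CK14} via the corresponding Racah orthogonality.
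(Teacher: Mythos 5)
Your proposal inherits a typo in the printed statement, and this is fatal to the whole plan. With the literal parameter $\alpha_k=\beta q^{k}$ the identity (\ref{eqn:lem2.1}) is actually \emph{false} whenever $n\neq 0$: for $m-n=1$ the left-hand side reduces, using $C_1(x;\alpha|q)=\frac{1-\alpha}{1-q}\,2\cos\theta$, to
\begin{align*}
\frac{2\cos\theta}{(1-q)\,(1-\beta^{2}q^{2n})}\Bigl[(1-\beta)+\beta q^{n}\bigl(1-\beta^{-1}q^{-n}\bigr)\Bigr]
=\frac{2\,\beta\,(q^{n}-1)\cos\theta}{(1-q)\,(1-\beta^{2}q^{2n})},
\end{align*}
which does not vanish for $n\neq 0$. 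The first polynomial in (\ref{eqn:lem2.1}) must read $C_{m-n-k}(x;\beta q^{k+n}|q)$, as one sees from the proof of Theorem \ref{thm:main}, where the lemma is invoked with precisely this parameter (then the bracket above becomes $(1-\beta q^{n})+\beta q^{n}-1=0$). This correction also dissolves the obstacle you built your route around: with $\alpha_k=\beta q^{k+n}$ and $\beta_k=\beta^{-1}q^{1-k-n}$ one has $\alpha_k\beta_k=q$, so the argument of the ${}_4\phi_3$ in (\ref{eqn:d(p)}) is $q^{2}\alpha_k^{-1}\beta_k^{-1}=q$ and both series are balanced and terminating. Your claim that Sears cannot be applied because the argument is $q^{n+1}$ is thus an artifact of the typo, and it steered you away from the actual argument: apply (\ref{eqn:sears1}) to the first ${}_4\phi_3$ in (\ref{eqn:d(p)}) and (\ref{eqn:sears2}) to the second; both become the single balanced series $\pfq{4}{3}{q^{-k},q^{-p},q^{n-m+p},\beta^{2}q^{2n+k-1}}{\beta q^{n},\beta q^{n},q^{n-m}}{q}{q}$, which is $R_p(\mu(k);q^{n-m-1},1,\beta q^{n-1},\beta q^{n-1};q)$, and after simplifying the Pochhammer symbols the $k$-sum is exactly the orthogonality relation (\ref{eqn:qracah_ortho}) with $\gamma=\delta=\beta q^{n-1}$. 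Your identification $\gamma\delta q=\beta^{2}q^{2n-1}$ is the correct one, but the mechanism by which the $q$-Racah structure appears is a direct Sears move, not the route you propose.

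Independently of the typo, the summation-interchange plan has an unsubstantiated central step. Expanding the ${}_4\phi_3$ in (\ref{eqn:formald(p)}) in the index $t$ and swapping the $k$- and $t$-sums, the $k$-dependence at fixed $t$ sits in four factors of the form $(c\,q^{-k};q)_t$ (numerator $(q^{-k};q)_t$, $(\beta^{-1}q^{1-n}q^{-k};q)_t$; denominator $(q^{1-p}\beta^{-1}q^{-k};q)_t$, $(q^{m-n-p+1}q^{-k};q)_t$), i.e., all descending in $k$. The $q$-Racah pairing $(q^{-k};q)_j(\gamma\delta q^{k+1};q)_j$ --- equivalently the variable $\mu(k)=q^{-k}+\gamma\delta q^{k+1}$ --- never appears, so a fixed-$t$ term yields at most a basis-type monomial and never a complete polynomial $R_{m_t}(\mu(k))$ to which (\ref{eqn:qracah_ortho}) could be applied; accordingly you never identify the degree $m_t$, and no such identification exists. (A swap of sums can be made to work, but then the inner $k$-sum must be \emph{evaluated} by a summation formula such as \cite[(II.21)]{GR04}; this reproduces the direct proof of (\ref{eqn:qracah_ortho}) sketched in Section \ref{sec:prelim} rather than invoking it.) In summary: correct the parameter to $\beta q^{k+n}$, after which Sears' transformations apply directly and the orthogonality argument closes the proof; as written, your proposal attempts a false statement and rests on a matching step that is asserted but cannot be carried out.
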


\begin{proof}
Apply Lemma \ref{lem:technical} with $n, \alpha_k, \beta_k$ specialised to $m-n, q^{k+n}\beta, q^{1-k-n}\beta^{-1}$ so that in particular $\alpha_k \beta_k = q$ for all $k$.
Then the left hand side of (\ref{eqn:lem2.1}) is $\sum_{p=0}^{m-n} d(p) e^{i(m-n-2p)\theta}$, where $x = \cos(\theta)$ and
\begin{align} 
d(p) &= \sum_{k=0}^{m-n-p}
  \frac{
    (1 - \beta^2 q^{2n + 2k - 1} )
  }{
    (\beta^2 q^{2n + k - 1}; q)_{m - n + 1}
  } \beta^k q^{k(k + n - 1)} \nonumber \\
  &\qquad \times \frac{(\beta q^{k+n};q)_{p}}{(q;q)_{p}}
  \frac{(\beta q^{k+n};q)_{m-n-p-k}}{(q;q)_{m-n-p-k}}
  \frac{(\beta^{-1}q^{1-k-n};q)_{k}}{(q;q)_k} \nonumber \\
  &\qquad \qquad \times \pfq{4}{3}{
    q^{-k}, q^{-p}, \beta q^{m - p}, q^{-n-k+1} \beta^{-1}
  }{
    \beta q^{n}, q^{-p - n - k + 1}\beta^{-1}, q^{m - n - k - p + 1}
  }{q}{q} \nonumber \\
  &+ \sum_{k=m-n-p+1}^{m-n}
    \frac{
      (1 - \beta^2 q^{2n + 2k - 1} )
    }{
      (\beta^2 q^{2n + k - 1}; q)_{m - n + 1}
    } \beta^k q^{k(k + n - 1)} \nonumber \\
    &\qquad \times \frac{(q^{k+n}\beta;q)_{m-n-k}}{(q;q)_{m-n-k}}
    \frac{(q^{1-k-n}\beta^{-1};q)_{p-m+n+k}}{(q;q)_{p-m+n+k}}
    \frac{(q^{1-k-n}\beta^{-1};q)_{m-n-p}}{(q;q)_{m-n-p}} \nonumber \\
    &\qquad \qquad \times \pfq{4}{3}{
      q^{k-m+n}, q^{p-m+n}, q^{k+n}\beta, q^{1+p-m}\beta^{-1}
    }{
      q^{1-m}\beta^{-1}, q^{k+p-m+n+1}, \beta q^{2n-m+p+k}
    }{q}{q}.
  \label{eqn:lem2.2}
\end{align}
We transform the basic hypergeometric series ${}_4 \phi_3$ of (\ref{eqn:lem2.2}).
Apply Sears' transformation formula (\ref{eqn:sears1}) to the first ${}_4\phi_3$ in (\ref{eqn:lem2.2}) to see that the ${}_4\phi_3$ is equal to
\begin{align}
\frac{
  (q^{-n - k + 1}\beta^{-1}, q^{m - n - k + 1}; q)_{k}
}{
  (q^{-n - k - p + 1}\beta^{-1}, q^{m - n - k - p + 1}; q)_{k}
} q^{-pk}
\pfq{4}{3}{
  q^{-k}, q^{-p}, q^{n - m + p}, \beta^2 q^{2n + k - 1} 
}{
  \beta q^{n}, \beta q^{n}, q^{n - m}
}{q}{q}. \label{eqn:lem2.case1}
\end{align}
Apply Sears' transformation formula (\ref{eqn:sears2}) to the second ${}_4 \phi_3$ in (\ref{eqn:lem2.2}) in order to see that the ${}_4\phi_3$ is equal to
\begin{align}
\frac{
  (q^{1+p-m} \beta^{-1}, q^{n-m}, \beta q^n; q)_{m-n-p}
}{
  (\beta q^{2n - m + k + p}, q^{1-m} \beta^{-1}, q^{-k}; q)_{m - n - p}
} \pfq{4}{3}{
  q^{n-m+p}, \beta^2 q^{2n + k - 1}, q^{-p}, q^{-k}
}{
  q^{n-m}, \beta q^n, \beta q^n
}{q}{q}.
\label{eqn:lem2.case2}
\end{align}
The ${}_4 \phi_3$ of (\ref{eqn:lem2.case1}) and (\ref{eqn:lem2.case2}) can be written as the $q$-Racah polynomial $R_p(\mu(k); q^{n - m - 1}, 1, \beta q^{n - 1}, \beta q^{n - 1}; q)$, see (\ref{eqn:qracah}).
Therefore (\ref{eqn:lem2.2}) becomes, after simplifying the $q$-Pochhammer symbols using $(q^r\beta^{-1};q)_{\ell} = (-1)^{\ell} q^{\frac{1}{2}\ell(\ell-1) + r\ell}\beta^{-\ell} (\beta q^{1-r-\ell};q)_{\ell}$ repeatedly,
\begin{align}
\frac{
  (\beta q^{n}; q)_{p} (\beta q^{n}; q)_{m - n - p}
}{
  (\beta^2 q^{2n}; q)_{m - n} (q; q)_{p} (q; q)_{m - n - p}
} &\sum_{k = 0}^{m - n}
\frac{
  (\beta^2 q^{2n - 1}, q^{n - m}; q)_{k}
}{
  (q, \beta^2 q^{n + m}; q)_{k}
}
\frac{
  (1 - \beta^2 q^{2n + 2k - 1})
}{
  (1 - \beta^2 q^{2n - 1})
}
q^{k(m - n)} \nonumber \\
&\qquad \times R_{p}(\mu(k); q^{n - m - 1}, 1, \beta q^{n - 1}, \beta q^{n - 1}; q). 
\label{eqn:lem2.4}
\end{align}
The $k$-sum of (\ref{eqn:lem2.4}) corresponds to the orthogonality relations (\ref{eqn:qracah_ortho}) for the $q$-Racah polynomial.
Hence (\ref{eqn:lem2.4}) becomes
\begin{align*}
d(p) = \frac{
  ( \beta q^{n}; q)_{m - n}
}{
  ( \beta q^{2n}; q)_{m - n} (q; q)_{m - n}
} \delta_{p, 0} h_0(\beta q^n, \beta q^n; m-n).
\end{align*}
Since $h_0(\beta q^n, \beta q^n; m-n) = 0$ if $n < m$ and $h_0(\beta q^n, \beta q^n; m-n) = 1$ if $m = n$, the result follows.
\end{proof}

\begin{proof}[Proof of Theorem \ref{thm:main}]
Multiplying the matrices $L^{\beta}$ and $M^{\beta}$ it is sufficient to evaluate the entries of $L^{\beta} M^{\beta}$ for $m \geq n$.
Hence
\begin{align*}
(L^{\beta}(x) M^{\beta}(x))_{m, n}
  &= \sum_{k = n}^{m}
  \frac{
    \beta^{k - n} q^{(k - 1)(k - n)}
  }{
    ( \beta^2 q^{2k}; q)_{m - k} ( \beta^2 q^{n + k - 1}; q)_{k - n}
  } 
  C_{m - k}(x; \beta q^{n} | q) 
  C_{k - n}(x; q^{1-k} \beta^{-1} | q) \\
&= \sum_{k = 0}^{m - n}
  \frac{
    (1 - \beta^2 q^{2n + 2k - 1} )
  }{
    (\beta^2 q^{2n + k - 1} ; q)_{m - n + 1}
  } \beta^k q^{k(k + n - 1)}
  C_{m - n - k}(x; \beta q^{k + n}| q)
  C_{k}(x; q^{1-k-n} \beta^{-1}| q).
\end{align*}
Applying Lemma \ref{lem:Linv} then yields the result.
\end{proof}

\section{Applications}
\begin{example} \label{exp:qto1}
The limit $q \to 1$ in the proof of Theorem \ref{thm:main} gives a new proof for \cite[Theorem 4.1]{CK14} for $\alpha = \beta$.
Lemma \ref{lem:technical} gives
\begin{align*}
\sum_{k=0}^n c(k) C^{(\alpha_k)}_{n-k}(x) C^{(\beta_k)}_{k}(x) = \sum_{p=0}^n d(p) e^{i(n-2p)\theta}, \qquad x = \cos(\theta),
\end{align*}
where $C^{(\alpha)}_{k}(x)$ are the Gegenbauer polynomials, see \cite[\S 9.8.1]{KLS10}, and
\begin{align*}
d(p) &= \sum_{k=0}^{n-p} 
  c(k)
  \frac{(\alpha_k)_{p}}{p!}
  \frac{(\alpha_k)_{n-p-k}}{(n-p-k)!} 
  \frac{(\beta_k)_{k}}{k!} 
  \pFq{4}{3}{
    -p, \alpha_k + n - p - k, -k, \beta_k
  }{
    1-p-\alpha_k, 1-k-\beta_k, n-p-k+1
  }{2-\alpha_k-\beta_k} \\
  &\qquad + \sum_{k=n-p+1}^{n}
    c(k)
    \frac{(\alpha_k)_{n-k}}{(n-k)!}
    \frac{(\beta_k)_{p-n+k}}{(p-n+k)!}
    \frac{(\beta_k)_{n-p}}{(n-p)!} \\
  &\qquad \qquad \times \pFq{4}{3}{
    k-n, p-n, \alpha_k, \beta_k+k+p-n
  }{
    1-n+k-\alpha_k, k+p-n+1, 1-n+p-\beta_k
  }{2-\alpha_k-\beta_k}.
\end{align*}
Then Lemma \ref{lem:Linv} yields, for $0 \leq n \leq m$ and $\alpha \in \CC$, $2\alpha \neq -2m+1, -2m+2 \ldots, -2n$,
\begin{align*}
\sum_{k=0}^{m-n} \frac{(2n+2k+2\alpha-1)}{(2n+k+2\alpha-1)_{m-n+1}} C_{m-n-k}^{(\alpha+k+n)}(x) C_{k}^{(1-k-n-\alpha)}(x) = \delta_{m,n},
\end{align*}
which is the key equation to show \cite[Theorem 4.1]{CK14} for the case $\alpha = \beta$.
\end{example}

\begin{example}
The problem of finding an inverse of the matrix $L^{\beta}$ in Theorem \ref{thm:main} originally arose in \cite{AKR14} where the finite dimensional lower triangular matrix
\begin{align*}
L(x)_{m,n} = q^{m-n}
  \frac{
    (q^2;q^2)_{m} (q^2;q^2)_{2n+1}
  }{
    (q^2;q^2)_{m+n+1} (q^2;q^2)_{n}
  } C_{m-n}(x;q^{2n+2}|q^2), \qquad 0 \leq n \leq m \leq N,
\end{align*}
for arbitrary $N \in \NN$ appears.
Using Corollary \ref{cor:main} in base $q^2$ with $\beta = q^2$ after conjugation with a diagonal matrix we find that the inverse matrix is given by
\begin{align*}
\left(L(x)\right)^{-1}_{m,n} = q^{(2m+1)(m-n)}
  \frac{
    (q^2;q^2)_{m} (q^2;q^2)_{m+n}
  }{
    (q^2;q^2)_{2m} (q^2;q^2)_{n}
  } C_{m-n}(x;q^{-2m}|q^2), \qquad 0 \leq n \leq m \leq N.
\end{align*}
Note that the entries of $L(x)$ and its inverse $L(x)^{-1}$ are independent of the size of $N$.
\end{example}

\begin{example}
From the generating function (\ref{eqn:q-ultra-gen}) for the continuous $q$-ultraspherical polynomials it follows that
\begin{align*}
\sum_{n = 0}^{\infty} C_n(x; \alpha \beta | q) t^n
  &= \frac{
    (\alpha t e^{i\theta}, \alpha t e^{-i\theta}; q)_{\infty}
  }{
    (t e^{i \theta}, t e^{-i \theta}; q)_{\infty}
  } 
  \frac{
    (\alpha \beta t e^{i \theta}, \alpha \beta t e^{-i \theta}; q)_{\infty}
  }{
    (\alpha t e^{i \theta}, \alpha t e^{-i \theta}; q)_{\infty}
  }
  = \sum_{m, n = 0}^{\infty}
    C_m(x; \alpha | q) C_n(x; \beta | q) t^m (\alpha t)^n.
\end{align*}
Comparing the powers of $t$ shows
\begin{align*}
C_n(x; \alpha \beta | q) 
  &= \sum_{k = 0}^n \alpha^k C_{n - k}(x; \alpha | q) C_k(x; \beta | q).
\end{align*}
Now take $\beta = \alpha^{-1}$, then (\ref{eqn:qultraexp}) for $\beta = 1$ gives
\begin{align} \label{eqn:gen1}
\delta_{n, 0} 
  = \sum_{k = 0}^n \alpha^k C_{n - k}(x; \alpha | q) C_k(x; \alpha^{-1} | q).
\end{align}
On the other hand from Lemma \ref{lem:technical} it follows that
\begin{align} \label{eqn:gen2}
\sum_{k=0}^n \alpha^k C_{n-k}(x;\alpha|q) C_k(x;\alpha^{-1}|q) = \sum_{p=0}^n d(p) e^{i(n-2p)\theta}, \qquad x = \cos(\theta).
\end{align}
Combining (\ref{eqn:gen1}) and (\ref{eqn:gen2}) it follows that $d(p) = \delta_{n,0}$.
Writing out the explicit expression of $d(p)$ gives for $n > 0$ the identity
\begin{align*}
0 &= \sum_{k=0}^{n-p} \alpha^{k}
  \frac{(\alpha;q)_p}{(q;q)_p}
  \frac{(\alpha;q)_{n-p-k}}{(q;q)_{n-p-k}}
  \frac{(\alpha^{-1};q)_{k}}{(q;q)_{k}}
  \pfq{4}{3}{
    q^{-p}, \alpha q^{n-p-k}, q^{-k}, \alpha^{-1}
  }{
    q^{1-p}\alpha^{-1}, \alpha q^{1-k}, q^{n-p-k+1}
  }{q}{q^2} \\
  &\qquad + \sum_{k=n-p+1}^{n} \alpha^{k}
    \frac{(\alpha;q)_{n-k}}{(q;q)_{n-k}}
    \frac{(\alpha^{-1};q)_{p-n+k}}{(q;q)_{p-n+k}}
    \frac{(\alpha^{-1};q)_{n-p}}{(q;q)_{n-p}}
    \pfq{4}{3}{
      q^{k-n}, q^{p-n}, \alpha, q^{k+p-n}\alpha^{-1}
    }{
      q^{1-n+k}\alpha^{-1}, q^{k+p-n+1}, \alpha q^{1-n+p}
    }{q}{q^2}.
\end{align*}
In particular if $p=0$
\begin{align} \label{eqn:app3}
\sum_{k=0}^{n} \alpha^k \frac{(\alpha;q)_{n-k}}{(q;q)_{n-k}} \frac{(\alpha^{-1};q)_{k}}{(q;q)_{k}} = 0.
\end{align}
Remark that (\ref{eqn:app3}) also follows from the $q$-Chu-Vandermonde sum \cite[(1.5.2)]{GR04}.
For $p=1$ and $n \mapsto n+1$ we find
\begin{align*}
\sum_{k=0}^n \alpha^k
\frac{ (\alpha; q)_{n-k} }{ (q;q)_{n-k} }
\frac{ (\alpha^{-1}; q)_{k} }{ (q; q)_{k} }
\left(
1 + \frac{
  (1 - \alpha q^{n-k})(1 - q^k)
}{
  (1 - \alpha q^{1-k})(1 - q^{n+1-k})
} q^{1-k}
\right)
=
\alpha^n \frac{(\alpha^{-1}; q)_{n}}{(q;q)_{n}}.
\end{align*}
Remark that this result also follows from applying (\ref{eqn:app3}) twice.
\end{example}

\section{Limit case \texorpdfstring{$\beta \to 0$}{beta to one}} \label{sec:bto0}
Define $L^0(x)$ and $M^0(x)$ by $L^0(x)_{m,n} = \lim_{\beta \to 0} L^{\beta}(x)_{m,n}$ and $M^0(x)_{m,n} = \lim_{\beta \to 0} M^{\beta}(x)_{m,n}$ where the limit is taken over $\beta \neq q^{\frac{k}{2}}$, where $k \in \ZZ$.
We show that the limits exist, that the entries of $L^0(x)$ are given in terms of continuous $q$-Hermite polynomials and that the entries of $M^0(x)$ are a given in terms of continuous $q^{-1}$-Hermite polynomials.

The continuous $q$-Hermite polynomials are given by
\begin{align} \label{eqn:qhermite}
H(x|q) = \sum_{k=0}^{n}
  \frac{(q;q)_n}{(q;q)_k(q;q)_{n-k}} e^{i(n-2k)\theta}, \qquad x = \cos(\theta),
\end{align}
see \cite[\S 14.26]{KLS10}.
The continuous $q$-Hermite polynomials are, apart from a different normalisation, the special case $\beta = 0$ of the continuous $q$-ultraspherical polynomials
\begin{align} \label{eqn:limCnHn}
C_n(x;0|q) = \frac{H_n(x|q)}{(q;q)_n}. 
\end{align}
The corresponding generating function for the continuous $q$-Hermite polynomials is
\begin{align} \label{eqn:Hngenfunc}
\sum_{n=0}^{\infty} \frac{H_n(x|q)}{(q;q)_n} t^n
= 
\frac{1}{(te^{i\theta}, te^{-i\theta}; q)_{\infty}} 
\qquad |t| < 1, \qquad x = \cos(\theta),
\end{align}
see \cite[(14.26.11)]{KLS10}.

The polynomials $H_n(x|q^{-1})$ are called the continuous $q^{-1}$-Hermite polynomials and are defined by taking $q \mapsto q^{-1}$ in (\ref{eqn:qhermite}), see \cite{Ask89}.
The continuous $q$-Hermite polynomials are orthogonal with respect to a positive measure on $(-1, 1)$.
However the continuous $q^{-1}$-Hermite polynomials are orthogonal on the imaginary axis and correspond to an indeterminate moment problem, see \cite{Ask89} and \cite{IM94}.

\begin{theorem} \label{thm:main0}
The doubly infinite lower triangular matrices $L^{0}(x)$ and $M^{0}(x)$ are given by
\begin{align*}
L^{0}(x)_{m, n} &= \frac{H_{m-n}(x|q)}{(q;q)_{m-n}}, \qquad
M^{0}(x)_{m, n} = (-1)^{m-n} q^{\binom{m-n}{2}} \frac{H_{m-n}(x|q^{-1})}{(q;q)_{m-n}}, \qquad n \leq m,
\end{align*}
where $m, n \in \ZZ$. 
$M^{0}(x)$ and $L^{0}(x)$ are each others inverse, i.e. $L^{0}(x)M^{0}(x) = I = M^{0}(x)L^{0}(x)$, where $I_{m, n} = \delta_{m, n}$ is the identity.
\end{theorem}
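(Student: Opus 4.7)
The plan is to establish the two entrywise limits separately, and then obtain the inverse relation by a continuity argument applied to Theorem \ref{thm:main} rather than by redoing the $q$-Racah computation of Section \ref{sec:mainproof}.

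The limit for $L^{0}(x)_{m,n}$ is routine: as $\beta \to 0$ the prefactor $(\beta^{2}q^{2n};q)_{m-n}$ tends to $1$, while $C_{m-n}(x;\beta q^{n}|q) \to C_{m-n}(x;0|q)$ term-by-term in \eqref{eqn:qultraexp}. The stated form then follows from \eqref{eqn:limCnHn}.

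The limit for $M^{0}(x)_{m,n}$ is the main obstacle, because the second argument $\beta^{-1}q^{1-m}$ of $C_{m-n}$ diverges. The approach I would take is to start from the explicit expansion \eqref{eqn:qultraexp} and apply the factorisation
\[
(\gamma;q)_{j} = (-\gamma)^{j} q^{\binom{j}{2}} \prod_{i=0}^{j-1}(1-\gamma^{-1}q^{-i}),
\]
with $\gamma = \beta^{-1}q^{1-m}$; the remaining product tends to $1$ as $\beta \to 0$. The powers of $\gamma$ combine with the explicit prefactor $\beta^{m-n}q^{(m-1)(m-n)}/(\beta^{2}q^{m+n-1};q)_{m-n}$: the $\beta$-dependence cancels up to the sign $(-1)^{m-n}$, and the net power of $q$ coming from the prefactor and the two $(-\gamma)$ factors telescopes to $0$. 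What remains is
\[
(-1)^{m-n}\sum_{j=0}^{m-n}\frac{q^{\binom{j}{2}+\binom{m-n-j}{2}}}{(q;q)_{j}(q;q)_{m-n-j}}\,e^{i(m-n-2j)\theta}.
\]
To identify this with a normalised continuous $q^{-1}$-Hermite polynomial I would substitute $q \mapsto q^{-1}$ in \eqref{eqn:qhermite}, apply the elementary inversion
\[
(q^{-1};q^{-1})_{r} = (-1)^{r} q^{-\binom{r+1}{2}}(q;q)_{r},
\]
and use the combinatorial identity $\binom{j}{2}+\binom{k-j}{2} = j(j-k) + \binom{k}{2}$ with $k=m-n$. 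After simplification the coefficient above is exactly $q^{\binom{m-n}{2}}H_{m-n}(x|q^{-1})/(q;q)_{m-n}$, giving the claimed formula.

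For the inverse relation $L^{0}(x)M^{0}(x) = I = M^{0}(x)L^{0}(x)$, I would not repeat the argument of Section \ref{sec:mainproof} but simply pass to the limit in Theorem \ref{thm:main}. Each entry $(L^{\beta}(x)M^{\beta}(x))_{m,n}$ is a finite sum over $n\leq k \leq m$ of rational functions of $\beta$ whose (potential) poles lie only at the excluded values $\beta = q^{k/2}$; in particular, these entries are continuous in $\beta$ in a punctured neighbourhood of $0$ and extend continuously to $\beta = 0$. Since they equal $\delta_{m,n}$ for every admissible $\beta$, the limit yields $L^{0}(x)M^{0}(x) = I$, and the same argument applied to $M^{\beta}(x)L^{\beta}(x)$ gives the other identity.
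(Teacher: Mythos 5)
Your proposal is correct, and its skeleton --- entrywise limits followed by passing to the limit in Theorem \ref{thm:main} --- is the same as the paper's; the genuine difference is in how the limit of $M^{\beta}(x)_{m,n}$ is computed. The paper sidesteps the diverging parameter altogether via the symmetry $C_n(x;\beta|q) = (\beta q^{-1})^n C_n(x;\beta^{-1}|q^{-1})$, read off from (\ref{eqn:qultraexp}), which rewrites $M^{\beta}(x)_{m,n}$ as $q^{-(m-n)}(\beta^2 q^{m+n-1};q)_{m-n}^{-1}\, C_{m-n}(x;\beta q^{m-1}|q^{-1})$, so that $\beta \to 0$ can be taken directly using (\ref{eqn:limCnHn}) in base $q^{-1}$. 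You instead extract the leading behaviour of the $q$-Pochhammer symbols term by term; I checked your steps and they are all sound: the factorisation $(\gamma;q)_j = (-\gamma)^j q^{\binom{j}{2}}\prod_{i=0}^{j-1}(1-\gamma^{-1}q^{-i})$ is valid, the powers of $\beta$ and $q$ do cancel against the prefactor since $(m-1)(m-n)+(1-m)(m-n)=0$, and the identities $(q^{-1};q^{-1})_r = (-1)^r q^{-\binom{r+1}{2}}(q;q)_r$ and $\binom{j}{2}+\binom{k-j}{2}=j(j-k)+\binom{k}{2}$ correctly identify the limiting coefficient with $q^{\binom{m-n}{2}}H_{m-n}(x|q^{-1})/(q;q)_{m-n}$. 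Your route is longer but self-contained; the paper's is a two-line argument, though it requires spotting the $q \leftrightarrow q^{-1}$ symmetry first --- in effect your Pochhammer manipulation amounts to a proof of that symmetry in the case at hand. For the inverse relation, your continuity argument makes explicit what the paper compresses into the single sentence that the claim ``follows from Theorem \ref{thm:main}'': each entry of $L^{\beta}(x)M^{\beta}(x)$ is a rational function of $\beta$ (note that $\beta^{m-n}C_{m-n}(x;\beta^{-1}q^{1-m}|q)$ is a polynomial in $\beta$, so the entries are regular at $\beta=0$) with only finitely many poles, equal to $\delta_{m,n}$ away from them, hence identically $\delta_{m,n}$ in the limit; one minor imprecision is that the poles sit at $\beta^2 = q^{k}$, $k \in \ZZ$, i.e.\ at $\beta = \pm q^{k/2}$ and not only at the values $q^{k/2}$ excluded in the statement, but this does not affect the argument at $\beta = 0$.
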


\begin{proof}
With (\ref{eqn:limCnHn}) we have for $n \leq m$
\begin{align*}
L^{0}(x)_{m,n} 
  = \lim_{\beta \to 0} L^{\beta}(x)_{m,n}
  = \lim_{\beta \to 0} \frac{1}{(\beta^2 q^{2n}; q)_{m-n}} C_{m-n}(x;\beta q^n|q)
  = \frac{H_{m-n}(x|q)}{(q;q)_{m-n}}.
\end{align*}
From (\ref{eqn:qultraexp}) it follows that $C_n(x;\beta|q) = (\beta q^{-1})^n C_n(x;\beta^{-1}|q^{-1})$.
Therefore write $M^{\beta}(x)_{m,n}$ as
\begin{align*}
\frac{\beta^{m-n} q^{(m-1)(m-n)}}{(\beta^2 q^{m+n-1}; q)_{m-n}} 
  (\beta^{-1} q^{-m})^{m-n} 
  C_{m-n}(x;\beta q^{m-1}|q^{-1})
=
  \frac{
    q^{-(m-n)}
  }{
    (\beta^2 q^{m+n-1}; q)_{m-n}
  } C_{m-n}(x; \beta q^{m-1}|q^{-1}).
\end{align*}
Upon taking the limit $\beta \to 0$ and using (\ref{eqn:limCnHn}) we find
\begin{align*}
M^{0}(x)_{m,n}
  = \lim_{\beta \to 0} M^{\beta}(x)_{m,n}
  = q^{-(m-n)} \frac{
    H_{m-n}(x|q^{-1})
  }{
    (q^{-1};q^{-1})_{m-n}
  }
  =
  (-1)^{m-n} q^{\binom{m-n}{2}}
  \frac{H_{m-n}(x|q^{-1})}{(q;q)_{m-n}}.
\end{align*}
From Theorem \ref{thm:main} it follows that $L^0(x)M^0(x) = I = M^0(x)L^0(x)$.
\end{proof}

\begin{corollary} \label{cor:main0}
For $N \in \NN$ define lower triangular matrices $L^{0}(x)$ and $M^{0}(x)$
\begin{align*}
L^{0}(x)_{m, n} &= \frac{H_{m-n}(x|q)}{(q;q)_{m-n}}, \qquad
M^{0}(x)_{m, n} = (-1)^{m-n} q^{\binom{m-n}{2}} \frac{H_{m-n}(x|q^{-1})}{(q;q)_{m-n}}, \qquad 0 \leq n \leq m \leq N,
\end{align*}
Then $M^{0}(x)$ and $L^{0}(x)$ are each others inverse, i.e. $L^{0}(x)M^{0}(x) = I = M^{0}(x)L^{0}(x)$, where $I$ is the identity matrix.
\end{corollary}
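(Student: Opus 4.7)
The plan is to deduce this finite-dimensional identity directly from the doubly infinite version, Theorem \ref{thm:main0}, using nothing more than the observation that both $L^0(x)$ and $M^0(x)$ are lower triangular. This is exactly the same reduction that passes from Theorem \ref{thm:main} to Corollary \ref{cor:main}: lower triangularity guarantees that the matrix product truncates cleanly, so no convergence or cancellation issue can arise when restricting the row/column range to $\{0,1,\ldots,N\}$.

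More concretely, denote the finite matrices in the statement by $L^0(x)^{(N)}$ and $M^0(x)^{(N)}$, and write $L^0(x)$, $M^0(x)$ for the doubly infinite matrices of Theorem \ref{thm:main0}. For $0 \le n \le m \le N$, the entry
\[
\bigl(L^0(x)^{(N)} M^0(x)^{(N)}\bigr)_{m,n} = \sum_{k=n}^{m} L^0(x)_{m,k}\, M^0(x)_{k,n}
\]
coincides term by term with
\[
\bigl(L^0(x) M^0(x)\bigr)_{m,n} = \sum_{k \in \ZZ} L^0(x)_{m,k}\, M^0(x)_{k,n},
\]
because lower triangularity forces the summand to vanish unless $n \le k \le m \le N$. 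Theorem \ref{thm:main0} says the latter equals $\delta_{m,n}$, so the former does too. The identity $M^0(x)^{(N)} L^0(x)^{(N)} = I$ is handled identically by reversing the roles of the two matrices. For $n > m$ both sides of each product are zero by construction.

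There is essentially no obstacle in this argument; the only thing to check is that the finite summation range used in the definition of the finite matrix product matches the naturally non-vanishing range in the doubly infinite product, which is immediate from the lower triangular support. This is why Theorem \ref{thm:main0} is formulated in the doubly infinite setting first: it encodes all finite truncations Corollary \ref{cor:main0} simultaneously, and the finite-dimensional statement follows by the same one-line truncation argument used to derive Corollary \ref{cor:main} from Theorem \ref{thm:main}.
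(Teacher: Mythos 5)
Your truncation argument is correct and is exactly the (implicit) argument the paper intends: just as Corollary \ref{cor:main} follows from Theorem \ref{thm:main} because lower triangularity makes the product entries finite sums supported on $n \le k \le m \subseteq \{0,\ldots,N\}$, Corollary \ref{cor:main0} follows from Theorem \ref{thm:main0} by the same one-line restriction of indices. Nothing is missing.
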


\begin{remark}
Theorem \ref{thm:main0} also follows from a generating function for the continuous $q^{-1}$-Hermite polynomials.
From \cite[Theorem 21.2.1]{Ism09}
\begin{align} \label{eqn:Hnq-1genfunc}
\sum_{n=0}^{\infty} (-1)^n q^{\binom{n}{2}} \frac{H_n(x|q^{-1})}{(q;q)_n} t^n
  = (te^{i\theta}, te^{-i\theta};q)_{\infty}, \qquad |t| < 1, \qquad x = \cos(\theta).
\end{align}
Combining (\ref{eqn:Hngenfunc}) and (\ref{eqn:Hnq-1genfunc}) it follows that for $|t| < 1$
\begin{align*}
1 = \frac{(te^{i\theta}, te^{-i\theta};q)_{\infty}}{(te^{i\theta}, te^{-i\theta}; q)_{\infty}}
  &= \left(
    \sum_{m=0}^{\infty} \frac{H_m(x|q)}{(q;q)_{m}} t^m
  \right)
  \left(
    \sum_{n=0}^{\infty} (-1)^n q^{\binom{n}{2}} \frac{H_n(x|q^{-1})}{(q;q)_{n}} t^n
  \right) \\
  &= \sum_{p=0}^{\infty} \left(
    \sum_{k=0}^p \frac{H_{p-k}(x|q)}{(q;q)_{p-k}} (-1)^k q^{\binom{k}{2}} \frac{H_k(x|q^{-1})}{(q;q)_k}
  \right) t^p.
\end{align*}
Take $p = m-n$ so that we have
\begin{align} \label{eqn:main0alt}
\sum_{k=0}^{m-n} \frac{H_{m-n-k}(x|q)}{(q;q)_{m-n-k}} (-1)^k q^{\binom{k}{2}} \frac{H_k(x|q^{-1})}{(q;q)_k} = \delta_{m,n}.
\end{align}
From (\ref{eqn:main0alt}) Theorem \ref{thm:main0} also follows.
\end{remark}

\subsection*{Acknowledgements}
The author thanks Erik Koelink and Pablo Rom\'an for many useful discussions.
Parts of the paper have been discussed with Tom Koornwinder, the author thanks him for his input.

The research of the author is supported by the Netherlands Organisation for Scientific Research (NWO) under project number \textbf{613.001.005} and by the Belgian Interuniversity Attraction Pole Dygest \textbf{P07/18}.

The author thanks the referees for their input, pointing out Remark \ref{rmk:referee} and deriving (\ref{eqn:lem2.case2}) in an alternative way.


\begin{thebibliography}{99}

\bibitem{AKR14}
N.~Aldenhoven, E.~Koelink, P.~Rom\'an, \emph{Matrix-valued orthogonal polynomials related to the quantum analogue of $(\SU(2)\times\SU(2), \diag)$}, in preparation.

\bibitem{Ask89}
R.~Askey, \emph{Continuous $q$-Hermite polynomials when $q > 1$}, {$q$}-series and partitions, IMA Vol. Math. Appl., 18:151--158, 1989.

\bibitem{AW79}
R.~Askey, J.~Wilson, \emph{A set of orthogonal polynomials that generalize the Racah coefficients or $6-j$ symbols}, SIAM J. Math. Anal., 10:1008--1016, 1979.

\bibitem{BG09}
A.~O.~Brega, L.~R.~Cagliero, \emph{$LU$-decomposition of noncommutative linear systems and Jacobi polynomials}, J. Lie Theory, 19:463--481, 2009.

\bibitem{CK14}
L.~R.~Cagliero, T.~H.~Koornwinder, \emph{Explicit matrix inverses for lower triangular matrices with entries involving Jacobi polynomials}, J. Approx. Theory, 193:20--38, 2014.

\bibitem{GR04}
G.~Gasper, M.~Rahman. \emph{Basic hypergeometric series}, Encyclopedia of Mathematics and its Applications, volume~96, Cambridge University Press, 2004.

\bibitem{Ism09}
M.~E.~H.~Ismail, \emph{Classical and quantum orthogonal polynomials in one variable}, Encyclopedia of Mathematics and its Applications, volume~98, Cambridge University Press, 2009.

\bibitem{IM94}
M.~E.~H.~Ismail, D.~R.~Masson, \emph{$q$-Hermite polynomials, biorthogonal rational functions, and $q$-beta integrals}, Trans. Amer. Math. Soc., 346:63--116, 1994.

\bibitem{KLS10}
R.~Koekoek, P.~A. Lesky, R.~F. Swarttouw, \emph{Hypergeometric orthogonal polynomials and their $q$-analogues}, Springer, 2010.

\bibitem{KvPR12}
E.~Koelink, M.~van Pruijssen, P.~Rom\'an, \emph{Matrix valued orthogonal polynomials related to $(\SU(2) \times \SU(2), \diag)$}. Int. Math. Res. Not. IMRN, 24:5673--5730, 2012.

\bibitem{KvPR13}
E.~Koelink, M.~van Pruijssen, R.~Rom\'an, \emph{Matrix-valued orthogonal polynomials related to $(\SU(2) \times \SU(2), \diag)$}, II, Publ. Res. Inst. Math. Sci. 49:271--312, 2013.

\bibitem{KdlRR14}
E.~Koelink, A.~M.~de los R\'ios, R.~Rom\'an, \emph{Matrix-valued Gegenbauer polynomials}, arXiv:1403.2938, 2014.

\end{thebibliography}
\end{document}